\newtheorem{theorem}{Theorem} \rm
\newtheorem{lemma}[theorem]{Lemma}
\newtheorem{conjecture}[theorem]{Conjecture}
\newtheorem{claim}[theorem]{Claim}
\newtheorem{problem}[theorem]{Problem}
\newtheorem{remark}[theorem]{Remark}
\newtheorem{question}[theorem]{Question}
\theoremstyle{plain}
\title{The square of every subcubic planar graph of girth at least 6 is 7-choosable}
\author{Seog-Jin Kim\thanks{Department of Mathematics Education, Konkuk University,
Korea.  E-mail: {\tt skim12@konkuk.ac.kr}}, 
\and~Xiaopan Lian\thanks{Center for Combinatorics and LPMC, Nankai University, China.
{E-mail: }{\tt lianxiaopanlxp@163.com}}
}
\begin{document}

\maketitle

\begin{abstract}
The square of a graph $G$, denoted $G^2$, has the
same vertex set as $G$ and has an edge between two vertices if the distance between
them in $G$ is at most $2$.  Thomassen \cite{Thomassen}  and Hartke, Jahanbekam and Thomas \cite{Hartke} 
proved that $\chi(G^2) \leq 7$ if $G$ is a subcubic planar graph.  A natural question is whether $\chi_{\ell}(G^2) \leq 7$ or not if $G$ is a subcubic planar graph.  It was showed in \cite{CK} that $\chi_{\ell}(G^2) \leq 7$ if $G$ is a subcubic planar graph of girth  at least 7.
We prove that  $\chi_{\ell}(G^2) \leq 7$ if
$G$ is a subcubic planar graph of girth at least 6.
\end{abstract}

\section{Introduction}

The {\em square} of a graph $G$, denoted $G^2$, has the
same vertex set as $G$ and has an edge between two vertices if the distance between
them in $G$ is at most $2$.
We say a graph $G$ is  {\em subcubic} if $\Delta(G) \leq 3$, where $\Delta(G)$ is the maximum degree in $G$.  The  {\em girth} of $G$, denoted $g(G)$, is the size of smallest cycle in $G$.
Let $\chi(G)$ be the  chromatic number of a graph $G$.

Wegner \cite{Wegner} posed the following conjecture.

\begin{conjecture}\cite{Wegner} \label{conj-Wegner}
Let $G$ be a planar graph. The chromatic number
$\chi(G^2)$ of $G^2$ is at most 7 if $\Delta(G) = 3$,
at most $\Delta(G)+5$ if $4 \leq \Delta(G) \leq 7$, and at most $\lfloor \frac{3 \Delta(G)}{2} \rfloor$ if $\Delta(G) \geq 8$.
\end{conjecture}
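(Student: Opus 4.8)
The plan is to treat Wegner's conjecture in its three natural regimes, according to the value of $\Delta := \Delta(G)$, and in each regime to argue by a minimal counterexample together with the discharging method. The regime $\Delta = 3$ is already settled: by the results of Thomassen and of Hartke, Jahanbekam and Thomas quoted above, every subcubic planar graph $G$ satisfies $\chi(G^2) \leq 7$, which is exactly the $\Delta = 3$ clause. It therefore remains to handle $4 \leq \Delta \leq 7$, where the target bound is $\Delta + 5$, and $\Delta \geq 8$, where the target is $k(\Delta) := \lfloor \tfrac{3\Delta}{2} \rfloor$.

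For a fixed $\Delta$ in one of the remaining regimes, let $k = k(\Delta)$ be the claimed bound and suppose, for contradiction, that $G$ is a planar graph with maximum degree $\Delta$ and $\chi(G^2) > k$ having the fewest vertices. Coloring $G^2$ properly is the same as assigning colors to $V(G)$ so that any two vertices at distance at most $2$ in $G$ receive distinct colors; equivalently, no two vertices sharing an edge or a common neighbor are monochromatic. The first step is to isolate \emph{reducible configurations}: small subgraphs $H \subseteq G$ such that, by minimality, $(G - V(H))^2$ can be $k$-colored, and such that the coloring extends to $V(H)$. Extension is possible at an uncolored vertex $v$ whenever the number of colors already forbidden to $v$ --- those appearing on the at most $\deg_{G^2}(v)$ vertices within distance $2$ of $v$ --- is strictly less than $k$. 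Typical reducible configurations are vertices of small degree, and edges or short paths all of whose endpoints have small second neighborhoods; ordering the uncolored vertices so that each is colored last among a controlled set lets one beat the bound $k$.

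The second step is the discharging argument that no planar graph with maximum degree $\Delta$ can avoid every reducible configuration. Assign to each vertex $v$ the charge $d(v) - 6$ and to each face $f$ the charge $2\,d(f) - 6$; by Euler's formula the total charge equals $-12 < 0$. One then designs discharging rules that move charge from low-degree vertices and short faces toward the high-degree vertices in their neighborhoods, exploiting the fact that a planar graph cannot have too many high-degree vertices mutually within distance $2$. If $G$ contains none of the reducible configurations, every vertex and every face should end with nonnegative final charge, contradicting the negative total. Calibrating the rules so that the charge inequalities hold exactly at the thresholds $\Delta + 5$ and $\lfloor \tfrac{3\Delta}{2}\rfloor$ is the technical heart of the verification.

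The hard part, and the reason the conjecture remains open for large $\Delta$, is the upper regime $\Delta \geq 8$. There the bound $\lfloor \tfrac{3\Delta}{2} \rfloor$ is tight, so the reducible configurations must be delicately chosen: the extension step fails as soon as an uncolored vertex is adjacent to several vertices of degree close to $\Delta$, and controlling how densely such high-degree vertices can cluster within distance $2$ in a planar embedding is exactly what the discharging must certify. The best unconditional results in this direction --- notably $\chi(G^2) \leq \tfrac{3}{2}\Delta\bigl(1 + o(1)\bigr)$ due to Havet, van den Heuvel, McDiarmid and Reed --- establish the conjecture only asymptotically, and closing the remaining additive gap to obtain the exact constant $\lfloor \tfrac{3\Delta}{2}\rfloor$ for every $\Delta \geq 8$ is the central obstacle; I expect it to require a structural density lemma for distance-$2$ neighborhoods strictly stronger than anything the standard discharging toolkit currently supplies.
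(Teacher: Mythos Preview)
The statement you are attempting to prove is not a theorem of the paper; it is Wegner's \emph{conjecture}, which the paper explicitly records as open (``Conjecture~\ref{conj-Wegner} is still wide open''). There is therefore no proof in the paper to compare your proposal against, and your task was misidentified from the start.

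Beyond that, your proposal is not a proof. You correctly note that the $\Delta=3$ case is settled by Thomassen and by Hartke--Jahanbekam--Thomas, but for $4\le\Delta\le 7$ and $\Delta\ge 8$ you only describe the generic architecture of a discharging argument (minimal counterexample, reducible configurations, initial charges summing to $-12$, redistribution) without producing any actual configurations or rules, let alone verifying that the final charges are nonnegative. Your own final paragraph concedes this: you write that the upper regime ``remains open'', that the best known results are only asymptotic, and that closing the gap ``is the central obstacle'' which you ``expect'' to require tools ``strictly stronger than anything the standard discharging toolkit currently supplies''. That is an accurate assessment of the state of the art, but it is an admission that you have not proved the statement, not a proof of it. A proof sketch that ends by explaining why the proof does not go through is not a proof.
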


Conjecture \ref{conj-Wegner} is still wide open.  The only case for which we know tight bound is when $\Delta(G) = 3$. Thomassen \cite{Thomassen} showed that $\chi(G^2) \leq 7$ if $G$ is a planar graph with $\Delta(G)  = 3$, which implies that Conjecture \ref{conj-Wegner} is true for $\Delta(G)  = 3$. Conjecture \ref{conj-Wegner} for $\Delta(G)  = 3$ is also confirmed by 
Hartke, Jahanbekam and Thomas \cite{Hartke}. The proof in \cite {Thomassen} relies on a detailed structural analysis, and the proof in \cite{Hartke} uses discharging argument with extensive
computer case-checking.  

Many results were obtained with conditions on $\Delta(G)$.
Bousquet, Deschamps, Meyer and Pierron \cite{BDMP} showed that $\chi(G^2) \leq 12$ if $G$ is a planar graph with $\Delta(G) \leq 4$, and 
Hou, Jin, Miao,  and Zhao \cite{HJMZ} showed that $\chi(G^2) \leq 18$ if $G$ is a planar graph with $\Delta(G) \leq 5$.  Also, Bousquet, Deschamps, Meyer and Pierron \cite{BDMP21} showed that $\chi(G^2) \leq 2 \Delta(G) + 7$ if $G$ is a planar graph with
$6 \leq \Delta(G) \leq 31$. 
For general $\Delta(G)$, best known upper bound is that $\chi(G^2) \leq \lceil \frac{5 \Delta(G)}{3}\rceil + 78$ by Molloy and Salavatipour \cite{MS05}.  On the other hand, Havet, van den Heuvel, McDiarmid, and Reed \cite{Havet} proved that
Conjecture \ref{conj-Wegner} holds asymptotically.
One may see detail story on the study of Wegner's conjecture in \cite{Cranston22}.

A list assignment for a graph is a function $L$ that assigns each vertex a list of
available colors. The graph is $L$-colorable if it has a proper coloring $f$ such that
$f (v) \in L(v)$ for all $v$. A graph is called $k$-choosable if $G$ is $L$-colorable whenever
all lists have size $k$. The list chromatic number $\chi_{\ell}(G)$ is the minimum $k$ such that $G$
is $k$-choosable.

Since it was known in \cite{Thomassen} that $\chi(G^2) \leq 7$ if $G$ is a subcubic planar graph, the following natural question was raised in \cite{CK}.
\begin{question} \cite{CK} \label{CK-question}
Is it true that  $\chi_{\ell} (G^2) \leq 7$ if $G$ is a subcubic planar graph?
\end{question}

For general upper bound on $\chi_{\ell}(G^2)$ for a subcubic graph $G$,
Cranston and Kim \cite{CK} proved that $\chi_{\ell} (G^2) \leq 8$ if $G$ is a connected graph (not necessarily planar) with $\Delta(G) = 3$ and if $G$ is not the Petersen graph.  And Cranston and Kim \cite{CK} proved that $\chi_{\ell} (G^2) \leq 7$ if $G$ is a  subcubic planar graph with $g(G) \geq 7$.

For a subcubic planar graph $G$ and for $k \in \{4, 5, 6, 7, 8\}$,  the best girth conditions known to imply that $\chi_{\ell}(G^2) \leq k$ are as follows (\cite{Cranston22}).
\begin{equation} \label{table-girth}
\begin{array}{ccccccc}
\chi_{\ell}(G^2) \leq & | & 8 & 7 & 6  & 5 & 4 \\
 \hline
g(G) \geq & | & 3 & 7 & 9 & 13 & 24
\end{array}
\end{equation}
Cranston and Kim \cite{CK} and Havet \cite{Havet09} showed that $\chi_{\ell}(G^2) \leq 6$ if $G$ is a subcubic planar graph with $g(G) \geq 9$.  Havet \cite{Havet09} showed that  $\chi_{\ell}(G^2) \leq 5$ if $G$ is a subcubic planar graph with $g(G) \geq 13$.  And
Borodin, Ivanova, and Neustroeva \cite{Borodin} and Havet \cite{Havet09} showed that
$\chi_{\ell}(G^2) \leq 4$ if $G$ is a subcubic planar graph with $g(G) \geq 24$.

\medskip

In this paper, we consider subcubic planar graphs.  To deduce that $\chi_{\ell}(G^2) \leq 7$, we 
improve the hypothesis $g(G) \geq 7$, shown in table (\ref{table-girth}), to $g(G) \geq 6$.
We prove the following main theorem. 
\begin{theorem} \label{main-thm}
If $G$ is a subcubic planar graph with girth at least 6, then $\chi_{\ell}(G^2) \leq 7$.
\end{theorem}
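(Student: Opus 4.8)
We sketch the intended proof. Suppose the theorem fails, and let $G$ be a counterexample minimising $|V(G)|+|E(G)|$; fix a plane embedding of $G$ and a $7$-list assignment $L$ on $V(G^2)=V(G)$ admitting no proper $L$-colouring of $G^2$. Two observations get things started: $G$ is connected, and $G$ has no vertex of degree at most $1$, since such a vertex has at most $3$ neighbours in $G^2$ and deleting it increases no distance at most $2$ among the remaining vertices, so by minimality we could colour $G-v$ and then colour $v$ from its list. More generally, the basic reduction scheme is: locate a small configuration, delete a vertex set $S$ from it, add (only when this keeps the graph planar and of girth at least $6$) a few edges among $N_G(S)$ to record the distance-$2$ constraints that ran through $S$, apply minimality to the resulting graph $G'$ to $L$-colour $(G')^2$, check that this colouring is a proper $L$-colouring of $G^2-S$, and finally extend it across $S$ vertex by vertex in a carefully chosen order, using that a vertex with at most $6$ already-coloured neighbours in $G^2$ can still be coloured from its $7$-list.

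The core of the argument is a list $\mathcal{H}$ of reducible configurations shown in this way not to occur in $G$. Representative examples I would include: a $2$-\emph{thread}, i.e.\ two adjacent degree-$2$ vertices $v_1v_2$ with third neighbours $u_1,u_2$ respectively — delete both $v_1,v_2$, add no edges (the interior of a thread never lies on a shortest path between two other vertices, so $(G-\{v_1,v_2\})^2=G^2-\{v_1,v_2\}$ here), and extend, noting $|N_{G^2}(v_1)|\le 5$ and $|N_{G^2}(v_2)|\le 5$; an isolated degree-$2$ vertex $v$ with neighbours $u_1,u_2$ that lies on no $6$-cycle of $G$ — delete $v$ and add the edge $u_1u_2$, which is safe because the shortest new cycle this could create corresponds to a hexagon through $v$; a degree-$3$ vertex adjacent to two or three degree-$2$ vertices; and an assortment of configurations controlling how $6$-faces may be incident with degree-$2$ vertices and with one another. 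The stubborn case that survives all reductions is an isolated degree-$2$ vertex lying on a hexagon, which must be handled by discharging.

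For the discharging phase I would use Euler's formula $|V(G)|-|E(G)|+|F(G)|=2$ with initial charge $\mu(v)=2d(v)-6$ for each vertex and $\mu(f)=\ell(f)-6$ for each face, where $\ell(f)$ is the length of $f$. Then $\sum_x\mu(x)=6\bigl(|E(G)|-|V(G)|-|F(G)|\bigr)=-12<0$, while every face has $\mu(f)=\ell(f)-6\ge 0$ with hexagonal faces having charge exactly $0$, every degree-$3$ vertex has charge $0$, and every degree-$2$ vertex has charge $-2$. I would then design discharging rules that ship charge from faces of length at least $7$ toward nearby degree-$2$ vertices (possibly also rerouting charge between a degree-$3$ vertex and its degree-$2$ neighbours), and prove that, once every configuration in $\mathcal{H}$ is excluded, every vertex and every face ends with nonnegative charge. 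This contradicts $\sum_x\mu(x)=-12$, and the theorem follows.

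The main obstacle — and the source of the difficulty in going from girth at least $7$ down to girth at least $6$ — is that a hexagonal face carries no surplus charge: in the natural weighting a $6$-face has just enough to be neutral and nothing to hand out, whereas each degree-$2$ vertex is in deficit by $2$ and the global deficit is $12$. Consequently essentially all of the work is pushed into the reducible-configuration analysis: one must rule out hexagons carrying two or more degree-$2$ vertices, degree-$2$ vertices in prescribed neighbourhoods, and clusters of hexagons around low-degree vertices, so that the scarce surplus of the longer faces is enough to rescue every degree-$2$ vertex. Verifying reducibility of this list — in particular choosing the deleted set and the extension order in each case so that no re-inserted vertex ever sees all $7$ colours, and checking that no auxiliary edge ever drops the girth below $6$ — is where the real effort lies; by contrast, under girth at least $7$ every face has surplus at least $1$, which is exactly the slack that makes the Cranston--Kim argument comparatively smooth.
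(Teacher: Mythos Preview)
Your outline correctly identifies the overall architecture (minimal counterexample, reducible configurations, discharging with charges $2d(v)-6$ and $\ell(f)-6$), and several of your reductions match the paper's. But there is a genuine gap at exactly the point you flag as ``stubborn'': a $2$-vertex lying on a hexagon. You say this case ``must be handled by discharging,'' yet your own charge accounting shows why that cannot work as stated. A $2$-vertex $v$ needs charge $2$; a hexagonal face has charge exactly $0$. If both faces incident to $v$ are hexagons, neither can donate anything, and the degree-$3$ neighbours also have charge $0$, so there is nothing local to reroute. You gesture at ruling out ``clusters of hexagons around low-degree vertices,'' but you give no configuration whose reducibility would forbid a $2$-vertex incident to two $6$-faces, and the naive delete-and-extend does not do it: deleting $v$ with neighbours $u_1,u_2$ on a $6$-cycle leaves $u_1,u_2$ at distance $4$ in $H=G-v$, so an $L$-colouring of $H^2$ may assign them the same colour, and you cannot add the edge $u_1u_2$ without creating a $5$-cycle.

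The paper closes this gap not by discharging but by a reducibility argument of a different flavour: it proves that \emph{no $6$-cycle in $G$ contains a $2$-vertex at all}. The technique is recolouring rather than greedy extension. One deletes the $2$-vertex $v_6$ from the $6$-cycle $v_1\cdots v_6$, takes an $L$-colouring $\phi$ of $H^2$, and observes that the only obstruction is $\phi(v_1)=\phi(v_5)$. In that case one \emph{uncolours all of} $v_1,\dots,v_5$, analyses the residual lists $C(v_i)$ (showing in particular that $C(v_1)=\{a,b,\alpha\}$, $C(v_5)=\{b,c,\alpha\}$, and $C(v_2),C(v_3),C(v_4)$ are trapped inside $\{a,b,c,\alpha\}$), and then runs a short case analysis to recolour the cycle so that $v_1$ and $v_5$ receive distinct colours, exploiting that $v_1v_4,v_2v_5\notin E(G^2)$ since $g(G)\ge 6$. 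Once this lemma is in hand, every $6$-face is free of $2$-vertices and the discharging becomes a one-line rule (each face gives $1$ to each incident $2$-vertex), with faces of length $\ge 7$ affording this because $2$-vertices on a cycle are pairwise at distance $\ge 4$. This recolouring step is the idea your sketch is missing.
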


Note that motivated by the List Total Coloring Conjecture, the following interesting conjecture was proposed in \cite{KW}. \\

\noindent {\bf List Square Coloring Conjecture} \cite{KW} For every graph $G$, we have $\chi_{\ell}(G^2) = \chi(G^2)$. \\

Thus Question \ref{CK-question} was naturally asked in \cite{CK}.  
However, the List Square Coloring Conjecture was disproved in \cite{KP}. Note that a positive result for the List Square Coloring Conjecture for a special class of graphs is still interesting.
It was conjectured in \cite{Havet} that $\chi_{\ell}(G^2) = \chi(G^2)$ if $G$ is a planar graph.  But, recently Hasanvand \cite{MH} proved that there exists a cubic claw-free planar
graph $G$ such that $\chi(G^2) = 4 < \chi_{\ell}(G^2)$.

\begin{remark} \rm
In \cite{CK}, Cranston and Kim showed that if $G$ is a minimal couterexample to Theorem \ref{main-thm}, then
the maximum average degree ($mad(G))$ is at least $\frac{14}{5}$, and then showed that the girth of $G$ is at most 6, which leads a contradiction.  But, applying the maximum average degree of $G$ to prove Theorem \ref{main-thm} is not helpful since $G$ is subcubic.

We use recoloring method to prove the main lemma (Lemma \ref{main-lemma}).
The procedure of proof of Lemma \ref{main-lemma} is as follows.  If $G$ is a minimal counterexample to Theorem \ref{main-thm} and $G$ has a 6-cycle $C$ containing a 2-vertex $u$, then obtain a proper subgraph $H = G - u$ by removing the 2-vertex $u$.  Next, we consider a coloring on $H^2$, and uncolor the vertices on the 6-cycle $C$, and then recolor the vertices on $C$ to have a proper coloring of $G^2$, which is a contradiction.
\end{remark}

\section{Proof of Theorem \ref{main-thm}}
In this section, let $G$ be a minimal counterexample to Theorem \ref{main-thm}.
It means that for any proper subgraph $H$ of $G$, $\chi_{\ell}(H^2) \leq 7$, but $\chi_{\ell}(G^2) > 7$.  A vertex of degree $k$ is called a  $k$-vertex.
First, we prove the following main lemma.

\begin{lemma} \label{main-lemma}
$G$ has no 6-cycle which contains a 2-vertex.
\end{lemma}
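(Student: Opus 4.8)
The plan is to argue by contradiction inside the minimal counterexample $G$. Suppose $G$ contains a $6$-cycle $C = x_1x_2x_3x_4x_5x_6$ in which $x_1$ is a $2$-vertex, so $N_G(x_1) = \{x_2,x_6\}$. Let $H = G - x_1$. Since $H$ is a proper subgraph, $\chi_\ell(H^2)\le 7$, so for any list assignment $L$ on $V(G)$ with $|L(v)| = 7$ for all $v$ we may fix a proper $L$-coloring $\varphi$ of $H^2$ (using $L$ restricted to $V(H)$). The goal is to modify $\varphi$ into a proper $L$-coloring of $G^2$, contradicting $\chi_\ell(G^2) > 7$.

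The first step is to pin down exactly how $G^2$ and $H^2$ differ near $x_1$. Deleting a vertex never decreases distances, so $E(H^2)\subseteq E(G^2 - x_1)$; and a pair $\{a,b\}$ with $a,b\neq x_1$ can be adjacent in $G^2$ but not in $H^2$ only if its unique path of length $\le 2$ passes through $x_1$, forcing $a,b\in N_G(x_1) = \{x_2,x_6\}$. Moreover $d_H(x_2,x_6)\ge 3$: a shorter path in $H$, together with $x_2x_1x_6$, would yield a cycle of length $\le 4$ in $G$, contradicting $g(G)\ge 6$. Hence $G^2 - x_1 = H^2 + x_2x_6$. This settles the \emph{easy case} at once: if $\varphi(x_2)\neq\varphi(x_6)$, then $\varphi$ is already a proper $L$-coloring of $G^2 - x_1$, and since the $G^2$-neighbours of $x_1$ all lie in $\{x_2,x_3,x_5,x_6\}$ together with the (at most two) third neighbours of $x_2$ and $x_6$, we have $\deg_{G^2}(x_1)\le 6 < |L(x_1)|$, so $\varphi$ extends to $x_1$ and we are done.

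So assume $\varphi(x_2) = \varphi(x_6)$. Now uncolour all of $x_1,\dots,x_6$; it suffices to colour the induced subgraph $G^2[\{x_1,\dots,x_6\}]$ from the reduced lists $L'(x_i)$ obtained from $L(x_i)$ by deleting the colours of the already-coloured $G^2$-neighbours of $x_i$ (all of which lie off $C$). Using $g(G)\ge 6$ one checks that $G^2[\{x_1,\dots,x_6\}]$ is the octahedron $K_{2,2,2}$ whose non-edges are $x_1x_4$, $x_2x_5$, $x_3x_6$, and that the relevant off-cycle degrees are controlled: $x_1$ has at most $2$ neighbours off $C$; each $x_i$ with $2\le i\le 6$ has at most $5$, only at most $4$ if $i\in\{2,6\}$, and at most $2$ if $x_i$ is a $2$-vertex. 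Accordingly $|L'(x_1)|\ge 5$, $|L'(x_2)|,|L'(x_6)|\ge 3$, and $|L'(x_3)|,|L'(x_4)|,|L'(x_5)|\ge 2$, with an extra $3$ of slack for each of $x_2,\dots,x_6$ that happens to be a $2$-vertex. Splitting on which of $x_2,\dots,x_6$ are $2$-vertices: whenever at least one of them is a $2$-vertex, the enlarged lists make a short explicit greedy ordering of the octahedron succeed (colour the small-list vertices first, then the medium ones, and $x_1$ last, checking in each case that the number of already-coloured octahedron-neighbours stays below the current list size).

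The remaining, and main, obstacle is the case in which $x_2,x_3,x_4,x_5,x_6$ are all $3$-vertices. Here $|L'(x_3)|,|L'(x_4)|,|L'(x_5)|$ can genuinely be as small as $2$, and if these $2$-element lists coincide the octahedron is not colourable from $L'$ by a naive argument. To handle this I would extract more from $g(G)\ge 6$: having $|L'(x_i)| = 2$ for $i\in\{3,4,5\}$ forces the off-cycle neighbour $y_i$ of $x_i$ to be a $3$-vertex and forces the five off-cycle $G^2$-neighbours of $x_i$ to receive distinct colours all lying in $L(x_i)$, which severely constrains the local colouring; one then either recolours a little more — additionally uncolouring a well-chosen $y_i$, which raises the corresponding $|L'|$ to $\ge 3$, recolours the octahedron, and finally recolours $y_i$, the girth hypothesis keeping its colour-relevant $G^2$-neighbourhood small — or observes that the forced local structure is a configuration that cannot occur in a minimal counterexample. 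Carrying this last case through cleanly, keeping track of the handful of extra vertices that may need recolouring and verifying that every reduced list stays large enough, is the technical heart of the lemma; everything preceding it is the routine structural bookkeeping sketched above.
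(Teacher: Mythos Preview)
Your setup matches the paper's exactly (delete the $2$-vertex, colour $H^2$, dispose of the case $\varphi(x_2)\neq\varphi(x_6)$, then uncolour the whole cycle), and your structural observations about $G^2[\{x_1,\dots,x_6\}]\cong K_{2,2,2}$ and the list-size lower bounds are correct. The problem is that you stop precisely where the actual work begins. You yourself note that with $|L'(x_3)|=|L'(x_4)|=|L'(x_5)|=2$ the octahedron need not be $L'$-choosable, and your remedy --- uncolour some off-cycle $y_i$, recolour the octahedron, then recolour $y_i$ --- is only a sketch. It is not clear it even works: $y_i$ can be a $3$-vertex with up to nine $G^2$-neighbours, so there is no room to recolour it greedily at the end, and you give no argument for why a usable colour must nonetheless exist. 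This is not a detail; it is the whole lemma.

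The paper avoids this trap by a different, and sharper, idea that you are missing. Rather than working only with the \emph{sizes} of the reduced lists, it exploits the \emph{specific colours} of the existing coloring $\varphi$. Writing $\varphi(x_2)=\varphi(x_6)=\alpha$ and $\varphi(x_3)=a$, $\varphi(x_4)=b$, $\varphi(x_5)=c$, one shows that the reduced list at each uncoloured cycle vertex is contained in $\{a,b,c,\alpha\}$: if, say, $L'(x_3)$ contained some $\gamma\notin\{a,b,c\}$, one could immediately recolour $x_3$ by $\gamma$, recolour $x_2$ by its old colour $a$ of $x_3$, and then greedily colour $x_1$. Similar one-step recolourings force $L'(x_2)=\{a,b,\alpha\}$, $L'(x_6)=\{b,c,\alpha\}$ exactly, and $L'(x_4)\subseteq\{a,b,c,\alpha\}$, $L'(x_5)\subseteq\{a,b,c\}$. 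This reduces the problem to a finite list of at most $2\times3\times2$ concrete cases, each of which admits an explicit proper colouring of the octahedron (using in particular that $x_2,x_5$ and $x_3,x_6$ may share colours). No further uncolouring off the cycle is needed, and no case split on the degrees of $x_2,\dots,x_6$ is required.
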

\begin{proof}
Suppose that $G$ has a 6-cycle $C$ which contains a 2-vertex.
We denote $V(C) = \{v_1, v_2, v_3, v_4, v_5, v_6\}$ where $v_6$ is the 2-vertex.
(see Figure \ref{6-cycle}.)

Let $L$ be a list assignment with lists of size 7 for each vertex in $G$.
Let $H = G - v_6$.  Then since $G$ is a minimal counterexample to Theorem \ref{main-thm}, the square of $H$ has a proper coloring $\phi$ such that $\phi(v) \in L(v)$ for each vertex $v \in V(H)$.

If $\phi(v_1) \neq \phi(v_5)$, then since $v_6$ has only 6 neighbors in $G^2$, we can complete a proper coloring for  $G^2$ by coloring $v_6$ by a color in $L(v_6)$.  This is a contradiction since $G$ is a counterexample to Theorem \ref{main-thm}.

Therefore, we can assume that $\phi(v_1) = \phi(v_5) = \alpha$ for some color $\alpha$.
And we assume that
\begin{equation} \label{color-assign}
\phi(v_2)  = a, \ \phi(v_3)  = b, \ \phi(v_4)  = c.
\end{equation}

Note that $|\{a,b,c,\alpha\}|=4$ since $\phi$ is a proper coloring of the square of $H$.  Now uncolor the vertices in $V(C)\setminus\{v_6\} = \{v_1, v_2, v_3, v_4, v_5\}$, and
investigate the color lists which are available at each vertex $v$ in $V(C)$.

For each $v \in V(C)$, let $C(v)$ be the color list which is available at $v$.  If we denote by $|C(v)|$ the number of available colors after uncoloring the vertices of $V(C)$, then we have the following information.\[
|C(v_1)| \ge 3, \ |C(v_2)| \ge 2, \ |C(v_3)| \ge2, |C(v_4)| \ge 2, |C(v_5)| \ge 3, |C(v_6)| \ge 5.
\]

\begin{figure}[htbp]
\begin{center}
    \includegraphics[scale=0.32]{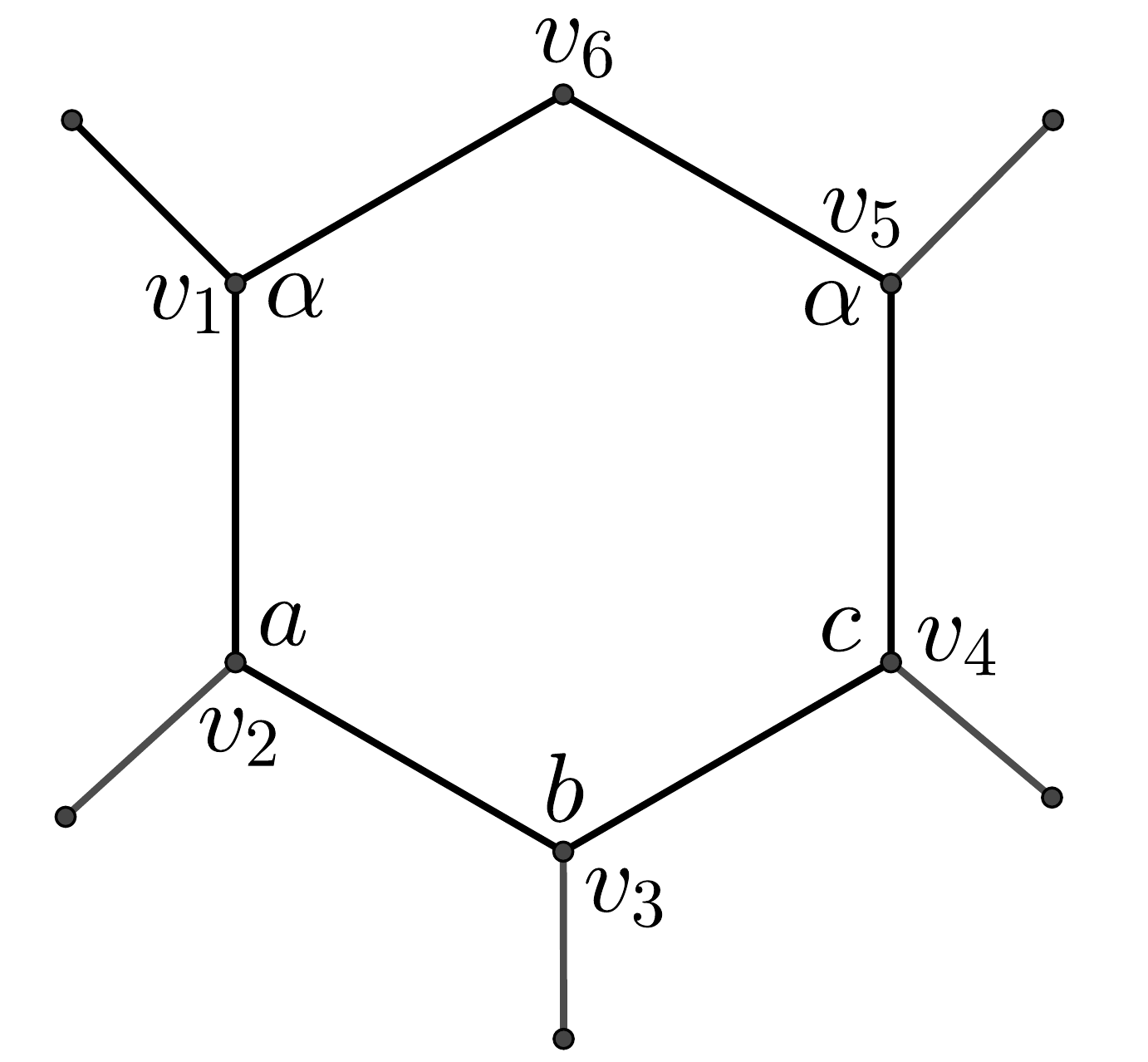}
\caption{6-cycle $C$ and coloring of the square of $H = G - v_6$} \label{6-cycle}
\end{center} 
\end{figure} 

First, to investigate $C(v_1)$ and $C(v_5)$, we claim the following holds.

\begin{claim} \label{claim-one}
By the coloring $\phi$ on the square of $H$, we have that
\[
\begin{aligned}
L(v_1)&=\{\phi(y):y\in V(G)\cap V(H)~\text{and}~v_1y\in E(G^2)\},\\
L(v_5)&=\{\phi(y):y\in V(G)\cap V(H)~\text{and}~v_5y\in E(G^2)\}.
\end{aligned}\]
\end{claim}
Suppose to the contrary that $L(v_1)\neq\{\phi(y):y\in V(G)\cap V(H)~\text{and}~v_1y\in E(G^2)\}$. Let $\gamma\in L(v_1)\setminus\{\phi(y):y\in V(G)\cap V(H)~\text{and}~v_1y\in E(G^2)\}$.
 Then we recolor vertex $v_1$ by $\gamma$ and greedily color vertex $v_6$.  This contradicts the fact that $\chi_\ell(G)>7$. Therefore, $L(v_1)=\{\phi(y):y\in V(G)\cap V(H)~\text{and}~v_1y\in E(G^2)\}$. By the same argument, we can show that $L(v_5)=\{\phi(y):y\in V(G)\cap V(H)~\text{and}~v_5y\in E(G^2)\}$.  Hence Claim \ref{claim-one} holds. \qed

\medskip
Therefore, by the definition of $C(v)$ for $v\in V(C)$ and Claim~\ref{claim-one}, we have that
\begin{equation} \label{list-v1}
C(v_1) = \{a, b, \alpha\}, \  C(v_5)  = \{b, c, \alpha\} \mbox{ for some color } \alpha.
\end{equation}

\medskip
 Next, we investigate $C(v_2)$, $C(v_3)$ and $C(v_4)$. Observe that, by the definition of $C(v)$ for $v\in V(C)$, $\phi(v)\in C(v)$. Then we prove the following claim.

\begin{claim} \label{list-v2}
$C(v_2)\subseteq\{a,b,c\}$, $C(v_3)\subseteq\{a,b,c,\alpha\}$, and $C(v_4)\subseteq\{a,b,c\}$.
\end{claim}
Suppose to the contrary that $C(v_2) \setminus \{a,b, c\}\neq\emptyset $. Then there is a color $\gamma \in C(v_2) \setminus \{a, b, c\}$. So, we recolor $v_2$ by color $\gamma$ and recolor $v_1$ by color $a\in C(v_1)$. Then we greedily color $v_6$. Thus, we have a proper coloring of $G^2$.  This is a contradiction.
Therefore, $C(v_2) \subseteq \{a,b,c\}$.  By the same argument, we can show that $C(v_3)\subseteq\{a,b,c,\alpha\}$, and $C(v_4)\subseteq\{a,b,c\}$.  
Hence Claim \ref{list-v2} holds. \qed

\medskip

Since $\phi(v)\in C(v)$ and $|C(v)|\ge 2$ for $v\in \{v_2,v_3,v_4\}$, by Claim~\ref{list-v2}, it suffices to consider that
the possible lists for $v_2, v_3, v_4$ are as follows.
\begin{equation} \label{v234}
\begin{array}{ccc}
C(v_2) &  C(v_3) & C(v_4) \\
\{a, b\} & \{b, a\} & \{c, a\} \\
\{a, c\} & \{b, c\} & \{c, b\} \\
 & \{b, \alpha\} &
\end{array}
\end{equation}

\medskip

Now, we recolor the vertices in $V(C) =  \{v_1, v_2, v_3, v_4, v_5, v_6\}$ from $C(v)$ for $v \in V(C)$ and obtain a proper coloring of $G^2$, which leads to a contradiction.  From now on, we denote the new coloring by $f$.  Hence, \[
\begin{array}{ll}
(a) \ f(v) = \phi(v),&  \mbox{ if } v \in V(G) \setminus V(C), \\
(b) \ f(v) \in C(v), &  \mbox{ if } v \in V(C).
\end{array}
\]

\bigskip
Now we will show that we have a proper coloring of $G^2$ from the lists in (\ref{list-v1}) and the table (\ref{v234}).  Since $|C(v_6)|\ge 5$, we can first color each vertex in $V(C)\setminus \{v_6\}=\{v_1,v_2,v_3,v_4,v_5\}$ and then we choose a color from $C(v_6)\setminus \{f(v):v\in \{v_1,v_2,v_4,v_5\}\}$ for vertex $v_6$.  Note that $v_1 v_4, \ v_2 v_5 \notin E(G^2)$
since $g(G) \geq 6$.  So, we can assign a same color at $v_1$ and $v_4$ ($v_2$ and $v_5$, respectively) in the new coloring $f$. 

\medskip

\noindent {\bf Case 1:} $C(v_2) = \{a, b\}$

Subcase 1.1.  When $C(v_4) = \{c, a\}$\\
We have the following recoloring on $\{v_1, v_2, v_3, v_4, v_5\}$ which produces a proper coloring of $G^2$.
\begin{equation*}
\begin{array}{ccccl}
C(v_2) &  C(v_3) & C(v_4) \\
 & \{b, a\} &  & \rightarrow  &  f(v_1)=\alpha, f(v_2) = b, f(v_3) = a, f(v_4) = c,  f(v_5) = b\\
\{a, b\} & \{b, c\} & \{c, a\} & \rightarrow  &  f(v_1) = a, f(v_2) = b, f(v_3) = c, f(v_4) = a,  f(v_5) = \alpha \\
 & \{b, \alpha\} &  & \rightarrow  & f(v_1)  = a, f(v_2) = b, f(v_3) = \alpha, f(v_4) = c, f(v_5) = b\\
\end{array}
\end{equation*}

Subcase 1.2. When $C(v_4) = \{c, b\}$\\
We have the following recoloring on $\{v_1, v_2, v_3, v_4, v_5\}$ which produces a proper coloring of $G^2$.
\begin{equation*}
\begin{array}{ccccl}
C(v_2) &  C(v_3) & C(v_4) \\
 & \{b, a\} &  & \rightarrow  & f(v_1)=\alpha,  f(v_2) = b, f(v_3) = a, f(v_4) = c,  f(v_5) = b\\
\{a, b\} & \{b, c\} & \{c, b\} & \rightarrow  &  f(v_1) = b, f(v_2) = a, f(v_3) = c, f(v_4) = b,  f(v_5) = \alpha\\
 & \{b, \alpha\} &  & \rightarrow  & f(v_1)  = a, f(v_2) = b, f(v_3) = \alpha, f(v_4) = c, f(v_5) = b\\
\end{array}
\end{equation*}

\noindent {\bf Case 2:} $C(v_2) = \{a, c\}$

Subcase 2.1. When $C(v_4) = \{c, a\}$\\
We have the following recoloring on $\{v_1, v_2, v_3, v_4, v_5\}$ which produces a proper coloring of $G^2$.
\begin{equation*}
\begin{array}{ccccl}
C(v_2) &  C(v_3) & C(v_4) \\
 & \{b, a\} &  &   &  \\
\{a, c\} & \{b, c\} & \{c, a\} & \rightarrow  &   f(v_1) = a, f(v_2) = c, f(v_3) = b, f(v_4) = a,  f(v_5) = \alpha \\
 & \{b, \alpha\} &  &   &
\end{array}
\end{equation*}

Subcase 2.2. When $C(v_4) = \{c, b\}$\\
We have the following recoloring on $\{v_1, v_2, v_3, v_4, v_5\}$ which produces a proper coloring of $G^2$.
\begin{equation*}
\begin{array}{ccccl}
C(v_2) &  C(v_3) & C(v_4) \\
 & \{b, a\} &  & \rightarrow  &f(v_1) = \alpha,  f(v_2) = c, f(v_3) = a, f(v_4) = b,  f(v_5) = c\\
\{a, c\} & \{b, c\} & \{c, b\} & \rightarrow  &   f(v_1) = b, f(v_2) = a, f(v_3) = c, f(v_4) = b,  f(v_5) = \alpha\\
 & \{b, \alpha\} &  & \rightarrow  & f(v_1)  = a, f(v_2) = c, f(v_3) = \alpha, f(v_4) = b,   f(v_5) = c\\
\end{array}
\end{equation*}
Thus, from Case 1 and Case 2, we have a proper coloring for $G^2$, which is a contradiction. Hence, $G$ has no 6-cycle which contains a 2-vertex.  This completes the proof of Lemma \ref{main-lemma}.
\end{proof}

We can see easily that the following lemma holds.

\begin{lemma} \label{minimum-degree}
$G$ has no 1-vertex.
\end{lemma}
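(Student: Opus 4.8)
The statement to prove is Lemma~\ref{minimum-degree}: the minimal counterexample $G$ has no $1$-vertex. The plan is to argue by the usual minimality/reducibility paradigm, mirroring the structure of the proof of Lemma~\ref{main-lemma} but in a much simpler setting. Suppose for contradiction that $G$ has a $1$-vertex $u$, and let $w$ be its unique neighbor. Let $L$ be a list assignment on $G$ with $|L(v)| = 7$ for every vertex $v$. Form the proper subgraph $H = G - u$. By minimality of $G$, the square $H^2$ admits a proper $L$-coloring $\phi$.

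Next I would bound the number of vertices of $G^2$ adjacent to $u$ that already receive a color under $\phi$. The neighbors of $u$ in $G^2$ are exactly $w$ together with the other neighbors of $w$ in $G$. Since $G$ is subcubic, $\deg_G(w) \le 3$, so $w$ has at most two neighbors other than $u$; hence $u$ has at most $1 + 2 = 3$ neighbors in $G^2$, and all of them lie in $V(H)$. Therefore at most $3$ colors of $L(u)$ are forbidden for $u$, and since $|L(u)| = 7 > 3$, there is a color in $L(u)$ avoiding all colored neighbors of $u$ in $G^2$. Extend $\phi$ to $u$ by this color; the result is a proper $L$-coloring of $G^2$, contradicting $\chi_\ell(G^2) > 7$. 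Hence $G$ has no $1$-vertex.

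There is essentially no obstacle here: the degree bound coming from subcubicity makes the greedy extension trivial, and the argument does not even require the girth hypothesis. The only thing to be careful about is correctly identifying the $G^2$-neighborhood of a pendant vertex (it is $N_G[w] \setminus \{u\}$, not something larger), so that the count $3 < 7$ is valid; one should also note in passing that if $G$ had an isolated vertex the claim would be vacuous for that component, but since $G$ is a connected minimal counterexample this case does not arise. I would present the proof in two or three short sentences, exactly in the spirit of the sentence "We can see easily that the following lemma holds" that precedes the statement.
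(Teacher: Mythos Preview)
Your proof is correct and is exactly the standard reducibility argument the paper has in mind when it writes ``We can see easily that the following lemma holds'' without giving any details. The paper omits the proof entirely, so there is nothing further to compare; your argument (delete the $1$-vertex, color $H^2$ by minimality, extend greedily since a pendant vertex has at most three neighbors in $G^2$) is the intended one.
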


The following lemma was proved in \cite{CK}.  We include it here for readers.

\begin{lemma}(Lemma 13 \cite{CK}) \label{7-reducible}
Let $G$ be a {\em minimal} graph such that $\chi_{\ell}(G^2) > 7$.  
For each vertex $v$, let $M_1(v)$ and $M_2(v)$ be the number of 2-vertices at distance 1 and distance 2 from $v$.
If $v$ is a 3-vertex, then $2M_1(v)+M_2(v)\leq 2$.  If $v$ is a 2-vertex, then $2M_1(v)+M_2(v)=0$.  
\end{lemma}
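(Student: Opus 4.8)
The plan is to prove both inequalities by contradiction against the minimality of $G$, re-using the delete-and-recolor scheme behind Lemma~\ref{main-lemma}. Suppose some vertex $v$ violates the stated bound, and let $W$ be a minimal set of nearby $2$-vertices witnessing the violation. For a $3$-vertex $v$ the offending configurations are exactly: two neighbors of $v$ are $2$-vertices; one neighbor of $v$ is a $2$-vertex and there is a $2$-vertex at distance $2$; or there are three $2$-vertices at distance $2$. For a $2$-vertex $v$, a single $2$-vertex at distance $1$ or $2$ already violates $2M_1(v)+M_2(v)=0$. In every case I set $H=G-W$; since $H$ is a proper subgraph, minimality gives an $L$-coloring $\phi$ of $H^2$, and it suffices to extend $\phi$ to a proper coloring of $G^2$ to reach the desired contradiction.

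The key subtlety is the same one exploited in Lemma~\ref{main-lemma}: $(G-W)^2$ is not $G^2-W$. Deleting a $2$-vertex $w$ with neighbors $p$ and $q$ destroys the distance-$2$ edge $pq$ of $G^2$, so $\phi$ need not respect those lost edges. Extending $\phi$ therefore means both assigning a color to each $w\in W$ and repairing every lost edge $pq$ so that $\phi(p)\neq\phi(q)$ in the final coloring. I would re-color a small set $R$ consisting of $W$ together with $v$ and, when needed, the outer neighbors of the vertices of $W$, processing the deleted $2$-vertices first and $v$ (and any shared far-neighbors) last. For each vertex to be colored I bound its forbidden colors in $G^2$: every $2$-vertex satisfies $\deg_{G^2}(w)\le 6$, and deleting the nearby $2$-vertices both lowers these degrees and removes $G^2$-edges, so each vertex of $R$ sees strictly fewer than $7$ forbidden colors and a valid choice remains. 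Because all deleted vertices border $v$, their $G^2$-neighborhoods overlap heavily, which keeps the total color demand in the closed second neighborhood of $v$ below the supply of $7$.

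The asymmetric weights reflect where the repaired edges land. A distance-$1$ $2$-vertex $x_i$ costs twice: it must be recolored, and the lost edge $v\,p_i$ it destroys is incident to $v$ itself, competing for colors at $v$; a $2$-vertex at distance $2$ destroys a lost edge incident to a neighbor of $v$ rather than to $v$, and so imposes only a single unit of demand. The threshold $2$ is calibrated so that the colors forced to be mutually distinct in the second neighborhood of $v$ never exceed $7$. I expect the main obstacle to be exactly this bookkeeping of lost distance-$2$ edges: one must check, in each offending configuration, that the deleted $2$-vertices can be colored while all lost edges are simultaneously repaired, and it is the point at which this first fails that pins down the bound $2M_1(v)+M_2(v)\le 2$ for $3$-vertices and $=0$ for $2$-vertices. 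Since $G$ is subcubic, only finitely many local configurations arise and each reduces to meeting a few size-$7$ lists, so no individual step is deep; the care lies in making the case analysis exhaustive.
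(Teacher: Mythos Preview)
The paper does not supply its own proof of this lemma: it is quoted from Cranston and Kim \cite{CK} with the sentence ``We include it here for readers,'' and no argument follows. So there is no proof in the present paper to compare your proposal against.

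Your plan is nonetheless the standard one and matches the argument in \cite{CK}. One deletes a small set containing $v$ and the nearby $2$-vertices, invokes minimality to color the square of the remainder, and then reinserts the deleted vertices in an order guaranteeing that each sees at most six previously colored $G^2$-neighbors; the quantity $2M_1(v)+M_2(v)$ is precisely the bookkeeping that makes this greedy extension succeed. You have correctly isolated the one genuine subtlety, namely the distance-$2$ edges destroyed when a $2$-vertex is removed, and your heuristic for the weights (a lost edge incident to $v$ itself versus to a neighbor of $v$) is exactly the right intuition. One small point: in some of the configurations it is cleaner to delete $v$ itself rather than only the witnessing $2$-vertices, since the pairs of neighbors of $v$ then become the lost edges and the recoloring of $v$ repairs them all at once; but your variant (delete $W$, then uncolor and recolor $v$) leads to the same counts. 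What remains is the finite case check you acknowledge at the end, and none of those cases conceals a difficulty.
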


From Lemma \ref{7-reducible}, we have the following lemma for 2-vertices. 

\begin{lemma} \label{distance-2-vertex}
For every cycle $C$ in $G$, the distance between any two 2-vertices on $C$ is at least 4.
\end{lemma}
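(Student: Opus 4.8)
The plan is to derive Lemma \ref{distance-2-vertex} directly from the degree restrictions in Lemma \ref{7-reducible}, together with the girth hypothesis $g(G)\ge 6$ and Lemma \ref{main-lemma}. Suppose for contradiction that some cycle $C$ in $G$ carries two $2$-vertices $u$ and $w$ whose distance in $G$ is at most $3$. Since Lemma \ref{7-reducible} tells us that a $2$-vertex $v$ satisfies $2M_1(v)+M_2(v)=0$, no two $2$-vertices can be at distance $1$ or $2$ from each other, so the distance between $u$ and $w$ is exactly $3$. Thus there is a path $u x y w$ in $G$, and $x,y$ are $3$-vertices (they are not $1$-vertices by Lemma \ref{minimum-degree}, and not $2$-vertices by the distance restriction).

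Next I would locate this path inside the cycle $C$. Walking around $C$ from $u$ to $w$, one of the two arcs has length at most $3$; since $d_G(u,w)=3$, both arcs have length exactly $3$, so $C$ is a $6$-cycle, and (say) $u=v_6$, and the two $2$-vertices sit at positions $v_3,v_6$ of $C$ in the notation of Lemma \ref{main-lemma}. But then $C$ is a $6$-cycle containing the $2$-vertex $u$, which is exactly the configuration forbidden by Lemma \ref{main-lemma}. This contradiction shows the distance between any two $2$-vertices on a common cycle is at least $4$.

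The main obstacle — really the only subtlety — is the case analysis on how the short $u$--$w$ path interacts with the cycle $C$. I must be careful that the path $uxyw$ realizing $d_G(u,w)=3$ need not lie on $C$; a priori $C$ could be long while a chordal shortcut of length $3$ connects $u$ and $w$. This is where the girth hypothesis does the work: if $d_G(u,w)\le 3$ and $u,w\in V(C)$, then taking the shorter arc of $C$ between them together with a shortest $u$--$w$ path produces a closed walk of length at most $3+3=6$; if the shorter arc has length strictly less than its complement, this closed walk is a non-trivial closed walk of length $<6$ (once we discard backtracking along shared edges), contradicting $g(G)\ge 6$, while if both arcs have length $3$ we get the $6$-cycle case handled above. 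I would spell out that reduction cleanly — verifying that the closed walk cannot degenerate — and then invoke Lemma \ref{main-lemma} to finish. The remaining details (that $x$ and $y$ are genuine $3$-vertices, that the walk is a genuine cycle) are immediate from Lemmas \ref{minimum-degree} and \ref{7-reducible}.
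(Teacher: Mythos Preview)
Your argument has a real gap, and it sits precisely where you flag the ``main obstacle.'' In the third paragraph you assert that the shorter arc of $C$ between $u$ and $w$, together with a shortest $u$--$w$ path, gives a closed walk of length at most $3+3=6$. That requires the shorter arc to have length at most $3$, which you never establish. Both arcs of $C$ are $u$--$w$ paths in $G$, so each has length \emph{at least} $3$; nothing forces either to have length \emph{at most} $3$. Because $u$ and $w$ are $2$-vertices, the first and last edges of the path $u\,x\,y\,w$ do lie on $C$, but the middle edge $xy$ can perfectly well be a chord of $C$. For instance, take $C=v_1v_2\cdots v_{12}$ with $u=v_1$, $w=v_7$ both of degree $2$, and add the chord $v_2v_8$: then $d_G(u,w)=3$ via $v_1v_2v_8v_7$, yet both arcs of $C$ have length $6$, and every cycle through $u$ has length at least $7$, so Lemma~\ref{main-lemma} is never triggered. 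The same unproved claim (``one of the two arcs has length at most $3$'') appears in your second paragraph.

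The paper's proof avoids all of this by applying Lemma~\ref{7-reducible} a second time, to the $3$-vertex $x$ on the path $u\,x\,y\,w$ rather than to $u$. Since $x$ has the $2$-vertex $u$ at distance $1$ and the $2$-vertex $w$ at distance $2$, we get $2M_1(x)+M_2(x)\ge 2\cdot 1+1=3>2$, contradicting the bound for $3$-vertices. This uses neither the girth hypothesis nor Lemma~\ref{main-lemma}, and it shows the stronger statement that \emph{any} two $2$-vertices of $G$ lie at distance at least $4$. You already had the path and already knew $x$ was a $3$-vertex; the missed step was to look at $M_1$ and $M_2$ from $x$'s point of view.
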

\begin{proof}
Let $x$ and $y$ be two 2-vertices.  Then $2M_1(x)+M_2(x)=0$ by Lemma \ref{7-reducible}.
So, the distance $x$ and $y$ must be at least 3.  If the distance between $x$ and $y$ is 3, then 
we have an $x,y$-path, $x w_1 w_2 y$, where $w_1$ and $w_2$ are 3-vertices.  But, in this case, $2M_1(w_1)+M_2(w_1) \geq 3 > 2$, which violates Lemma \ref{7-reducible}.
Thus the distance between $x$ and $y$ is at least 4.
\end{proof}

Before we prove Theorem \ref{main-thm}, we prove the following lemma.

\begin{lemma} \label{2-vertex-no-cutvertex}
If $u$ is a 2-vertex in $G$, then $u$ is not a cut-vertex in $G$.
\end{lemma}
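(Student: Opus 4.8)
The plan is to use that $u$ has degree~$2$, so deleting it breaks $G$ into two pieces whose squares are (almost) independent, and then to repair the single edge of $G^2$ that crosses the cut.

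First I would fix the structure. Since a minimal counterexample is connected, $G$ is connected, and $\deg_G(u)=2$ by Lemma~\ref{minimum-degree}; write $N_G(u)=\{x,y\}$. As $u$ is a cut-vertex and every component of $G-u$ must contain a neighbour of $u$, the graph $G-u$ has exactly two components $G_1\ni x$ and $G_2\ni y$, each with at least two vertices (otherwise $x$ or $y$ would be a $1$-vertex, contradicting Lemma~\ref{minimum-degree}). Applying Lemma~\ref{7-reducible} to the $2$-vertex $u$ gives $2M_1(u)+M_2(u)=0$, so $x$, $y$ and every vertex at distance~$2$ from $u$ are $3$-vertices; write $N_G(x)=\{u,x_1,x_2\}$ and $N_G(y)=\{u,y_1,y_2\}$. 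Because $g(G)\ge 6$, the vertices $x,y,x_1,x_2,y_1,y_2$ are pairwise distinct, no two are adjacent except along $x_1x x_2$ and $y_1y y_2$, and $x\not\sim y$; hence $N_{G^2}(u)=\{x,y,x_1,x_2,y_1,y_2\}$ and $xy\in E(G^2)$. A short check of paths of length $\le 2$ (their interior vertex cannot be $u$ unless both endpoints equal $x$ or both equal $y$, and any path between the two components of $G-u$ passes through $u$) shows that $G^2[V(G_i)]=(G_i)^2$ for $i=1,2$ and that $G^2-u$ is the disjoint union $(G_1)^2\sqcup(G_2)^2$ plus the single extra edge $xy$.

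Next I would assemble the colouring. Let $L$ be a $7$-list assignment. Since $G-u$ is a proper subgraph of $G$, its square $(G-u)^2=(G_1)^2\sqcup(G_2)^2$ has a proper $L$-colouring; choose proper $L$-colourings $\phi_1$ of $(G_1)^2$ and $\phi_2$ of $(G_2)^2$. If $\phi_1(x)\ne\phi_2(y)$, then $\phi_1\cup\phi_2$ is a proper $L$-colouring of $G^2-u$, and since $u$ has only the six neighbours listed above it can be coloured from $L(u)$, contradicting the choice of $G$. So we may assume $\phi_1(x)=\phi_2(y)=:c$, and it remains to recolour so that the colours at $x$ and $y$ differ, keeping $\phi_1$ and $\phi_2$ proper on their respective squares (this is legitimate because the only edge of $G^2-u$ joining the two squares is $xy$).

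I expect this recolouring to be the main obstacle. The easy moves are to try to recolour $x$ inside $(G_1)^2$: its six neighbours forbid at most six colours, so if at least two colours of $L(x)$ are available we recolour $x$ off $c$ and finish, and symmetrically for $y$. The hard case is when both $x$ and $y$ are \emph{saturated}: the six neighbours $x_1,x_2,x_1',x_1'',x_2',x_2''$ of $x$ in $(G_1)^2$ (where $x_i',x_i''$ are the neighbours of $x_i$ other than $x$) carry the six distinct colours of $L(x)\setminus\{c\}$, and similarly at $y$. Here girth $\ge 6$ is decisive: a $4$- or $5$-cycle through $x$ is forbidden, which forces $x_1\not\sim x_2'$, $x_1\not\sim x_2''$, $x_2\not\sim x_1'$, $x_2\not\sim x_1''$ in $(G_1)^2$, so for instance $x_1$ may legally be recoloured with $\phi_1(x_2')$ unless some vertex at distance~$2$ from $x_1$ already carries that colour. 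One then argues --- using these non-adjacencies, their analogues at $y$, and the symmetry between $G_1$ and $G_2$ --- that at least one such duplication can always be carried out; afterwards $x$ (or $y$) sees only five colours on its neighbourhood, can be recoloured off $c$, and $u$ is coloured last. Making this final step airtight, i.e.\ ruling out in every configuration the possibility that every candidate recolouring is blocked by a distance-$2$ vertex of the wrong colour, is where the real work lies; by contrast the reduction $G^2-u=(G_1)^2\sqcup(G_2)^2+xy$ and the colouring of $u$ are routine.
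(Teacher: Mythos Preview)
Your structural analysis is fine: the decomposition $G^2-u=(G_1)^2\sqcup(G_2)^2+xy$ is correct, and the easy case $\phi_1(x)\ne\phi_2(y)$ is handled properly. The problem is the saturated case. You do not actually carry out the recolouring argument; you only assert that ``at least one such duplication can always be carried out'' and then concede that ``making this final step airtight\dots\ is where the real work lies''. As stated, this is a gap, not a proof. Concretely, even with $x_1\not\sim x_2',x_2''$ in $(G_1)^2$, the vertex $x_1$ can have up to eight neighbours in $(G_1)^2$ (three direct neighbours $x,x_1',x_1''$, plus $y$-side is irrelevant but on the $G_1$ side $x_2$ via $x$, and two further vertices through each of $x_1',x_1''$), so with lists of size $7$ there is no immediate spare colour at $x_1$, and the symmetry between $G_1$ and $G_2$ does not by itself produce one. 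Turning this into a complete argument would require genuine additional ideas (a Kempe-type exchange or a careful case analysis), none of which you supply.

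The paper avoids all of this with a one-line device you are missing: instead of colouring $G_1$ and $G_2$ separately and then repairing the conflict at $xy$, form $H=G-u+xy$. Because $u$ is a cut-vertex with $x\in G_1$ and $y\in G_2$, the new edge $xy$ creates no cycle, so $H$ is still planar, subcubic (each of $x,y$ lost the edge to $u$ and gained the edge $xy$), and of girth at least $6$; moreover $|V(H)|<|V(G)|$, so by minimality $H^2$ admits an $L$-colouring $\phi$. Since $xy\in E(H)\subseteq E(H^2)$ we get $\phi(x)\ne\phi(y)$ for free, and since $G^2-u\subseteq H^2$ (the only $G^2$-edge through $u$ between $V(G)\setminus\{u\}$ is $xy$), $\phi$ is already a proper $L$-colouring of $G^2-u$, after which $u$ is coloured greedily. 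This replaces your entire recolouring discussion with a single application of minimality.
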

\begin{proof}
Suppose that $u$ is a cut-vertex in $G$.  Then $G - u$ is not a connected graph. 
Let $N_G(u) = \{x, y\}$ and let $H = G - u + xy$.  That is, $H$ is the resulting graph obtained from $G$ by removing $u$ and making $x$ and $y$ adjacent.  Note that $H$ is still subcubic and the girth of $H$ is at least 6.  Then since $|V(H)| = |V(G)| - 1$ and $G$ is a minimal counterexample to Theroem \ref{main-thm}, $H^2$ has a proper coloring $\phi$.  Note that $\phi(x) \neq \phi(y)$.  
Since $u$ has at most 6 neighbors in $G^2$, we have a proper coloring of $G^2$ by coloring greedily $u$.  This is a contradiction.  Hence a 2-vertex $u$ is not a cut-vertex in $G$.
\end{proof}

Now we prove the main theorem.

\medskip
\noindent {\bf Theorem 3.}
If $G$ is a subcubic planar graph with girth at least 6, then $\chi_{\ell}(G^2) \leq 7$.
\begin{proof}
Let $G$ be a minimal counterexample to the theorem and let $G$ be a plane graph drawn on the plane without crossing edges.  Let $F(G)$ be the set of faces of $G$. For a face $C \in F(G)$, let $d(C)$ be the length of $C$.  

We assign $2d(x)-6$ to each vertex $x \in V(G)$ and $d(x) - 6$ for each face $x \in F(G)$ as an original charge function $\omega(x)$ of $x$.
According to Euler's formula $|V(G)| - |E(G)| + |F(G)| = 2$,
we have
\begin{equation} \label{eqn1}
\sum_{x\in V(G)\cup F(G)}\omega(x) = \sum_{v\in V(G)}(2d(v)-6)+\sum_{f\in F(G)}(d(f)-6) = -12. 
\end{equation}
We next design some discharging rules to redistribute charges along the graph with conservation of the total charge. Let $\omega'(x)$ be the charge of $x \in V(G)\cup F(G)$ after the discharge procedure such that ${\displaystyle \sum_{x\in V(G)\cup F(G)}\omega(x)=\sum_{x\in V(G)\cup F(G)}\omega'(x)}$. Next, we will show that $\omega'(x)\ge 0$ for all $x\in V(G)\cup F(G)$, which leads the following contradiction.
\[
0\le \sum_{x\in V(G)\cup F(G)}\omega'(x)=\sum_{x\in V(G)\cup F(G)}\omega(x) = \sum_{v\in V(G)}(2d(v)-6)+\sum_{f\in F(G)}(d(f)-6) = -12.
\]

\medskip
Observe that $G$ has no 1-vertex by Lemma~\ref{minimum-degree}.
Thus, we have the following discharging rule.

\medskip
\noindent {\bf The discharging rule:}
\begin{enumerate}[(R1)]
\item If a 2-vertex $u$ is on a face $C$, then $C$ gives charge 1 to $u$.
\end{enumerate}
\medskip

Now, we will show that the new charge $\omega'(x) \geq 0$ for every $x \in V(G) \cup F(G)$.

\medskip
\noindent
(1) When $x \in V(G)$ \\
If $d(x)  = 2$, then  $\omega(x)  = -2$.
By (R1),  $x$ receives charge 1 from each of its incident faces.  So, $\omega'(x) = 0$.
Note that every 2-vertex $x$ is incident to two faces by Lemma \ref{2-vertex-no-cutvertex}.
If $d(x) = 3$, then $\omega(x) = \omega'(x) = 0$.

\medskip
\noindent (2) When $x \in F(G)$ \\
Here we denote $x$ by a face $C$.
If $C$ is a 6-cycle, then $C$ has no 2-vertex by Lemma \ref{main-lemma}.  So
 $\omega(C) = \omega'(C)  = 0$.

 \medskip 
To complete the case (2), we first prove the following claim.

\begin{claim} \label{number-2-vertex}
For every face $C$ of $G$, there are at most at most $\lfloor\frac{d(C)}{4}\rfloor$ 2-vertices on the boundary of $C$.
\end{claim}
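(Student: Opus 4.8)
The plan is to control the number $t$ of $2$-vertices lying on the boundary of a fixed face $C$ by watching how they cut up the closed boundary walk $W$ of $C$. If $t=0$ there is nothing to prove, so assume $t\ge 1$. By Lemma~\ref{2-vertex-no-cutvertex} no $2$-vertex is a cut-vertex, hence each $2$-vertex on $C$ occurs \emph{exactly once} in $W$; consequently the $t$ $2$-vertices split $W$ into $t$ edge-disjoint segments $P_1,\dots,P_t$, where each $P_i$ runs along $W$ from one $2$-vertex to the next and contains no $2$-vertex in its interior. Writing $\ell_i$ for the length of $P_i$, we have $\ell_1+\cdots+\ell_t=d(C)$, so it suffices to prove $\ell_i\ge 4$ for every $i$: then
\[
d(C)=\sum_{i=1}^{t}\ell_i\ge 4t,
\]
and since $t$ is an integer this gives $t\le\lfloor d(C)/4\rfloor$, as claimed.

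To show $\ell_i\ge 4$, first suppose $t\ge 2$, so the two endpoints $u,u'$ of $P_i$ are distinct $2$-vertices. If $\ell_i=1$ then $u,u'$ are adjacent $2$-vertices, so $M_1(u)\ge 1$; if $\ell_i=2$ then there is a $2$-vertex at distance $2$ from $u$, so $M_2(u)\ge 1$; either case contradicts $2M_1(u)+M_2(u)=0$ from Lemma~\ref{7-reducible}. If $\ell_i=3$, then, using that each $2$-vertex occurs only once on $W$ and that $g(G)\ge 6$ forbids loops and multi-edges, $P_i$ is a path $u\,w_1\,w_2\,u'$; since $w_1$ is a neighbour of the $2$-vertex $u$ it must be a $3$-vertex, $u$ lies at distance $1$ from $w_1$, and $u'$ lies at distance $2$ from $w_1$ (it cannot be adjacent to $w_1$, else $w_1 u' w_2$ would be a triangle). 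Hence $2M_1(w_1)+M_2(w_1)\ge 3>2$, again contradicting Lemma~\ref{7-reducible}; this is exactly the distance estimate of Lemma~\ref{distance-2-vertex}. Thus $\ell_i\ge 4$ whenever $t\ge 2$.

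It remains to treat $t=1$, where $P_1$ is all of $W$, so $\ell_1=d(C)$ and we must verify $d(C)\ge 4$. The unique $2$-vertex $u$ on $C$ has two distinct neighbours $a,b$ (since $g(G)\ge 6$ rules out a multi-edge), so $W$ has the form $a,u,b,\dots,a$; a boundary of length at most $3$ through $u$ would force a loop, a multi-edge, or the triangle $a\,u\,b$, all impossible when $g(G)\ge 6$. Hence $d(C)=\ell_1\ge 4$, which completes the proof.

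I expect the only delicate point to be the bookkeeping in the last two paragraphs: checking that a segment of length $\le 3$ between two $2$-vertices (respectively a face boundary of length $\le 3$ carrying a $2$-vertex) genuinely yields one of the configurations ruled out by Lemma~\ref{7-reducible} or by the girth hypothesis, rather than some degenerate closed walk with repeated vertices. Once $\ell_i\ge 4$ is established for every $i$, the counting step $d(C)=\sum_i \ell_i\ge 4t$ and the passage to the floor are immediate.
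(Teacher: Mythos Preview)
Your proof is correct and uses the same two ingredients as the paper (Lemma~\ref{2-vertex-no-cutvertex} to control how $2$-vertices sit on the boundary walk, and Lemma~\ref{7-reducible}/Lemma~\ref{distance-2-vertex} to force distance at least~$4$ between them), but the decomposition is different. The paper breaks the closed boundary walk $W$ into the cycles $D_1,\dots,D_k$ it contains, observes that every $2$-vertex on $W$ must lie on one of these cycles (since its incident edges are not bridges), applies Lemma~\ref{distance-2-vertex} on each $D_i$ to get at most $\lfloor d(D_i)/4\rfloor$ two-vertices there, and sums. You instead cut $W$ at the $2$-vertices themselves into walks $P_1,\dots,P_t$ and bound each $\ell_i\ge 4$ directly. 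Your route is a bit more elementary in that it avoids invoking the cycle decomposition of a face boundary; the paper's route makes the role of Lemma~\ref{distance-2-vertex} more transparent. Two small remarks: (i) for $\ell_i=3$ your ``triangle'' justification is fragile because $P_i$ is a priori only a walk, but the conclusion is still fine since the walk gives $d_G(u,u')\le 3$, and then the argument of Lemma~\ref{distance-2-vertex} (which never actually uses the ambient cycle) yields the contradiction; (ii) your $t=1$ case could be shortened by noting that the $2$-vertex lies on one of the boundary cycles $D_j$, which already forces $d(C)\ge d(D_j)\ge g(G)\ge 6$.
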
 
\begin{proof}
Let $W$ be the closed walk which is the boundary of $C$. Let $D_1,\ldots,D_k$ be the cycles of $G$ contained in $W$. Thus, by Lemma~\ref{distance-2-vertex}, for each $i\in \{1,\ldots,k\}$, $D_i$ contains at most $\lfloor \frac{d(D_i)}{4} \rfloor$ 2-vertices. Since $G$ is a subcubic graph, by Lemma~\ref{2-vertex-no-cutvertex}, the 2-vertices are only possibly contained in the cycles $D_1,\ldots,D_k$. Hence, the number of 2-vertices contained in $C$ is at most $ \sum^k_{i=1}\lfloor \frac{|E(D_i)|}{4} \rfloor\le \lfloor\frac{\sum^k_{i=1}{|E(D_i)|}}{4}\rfloor\le \lfloor\frac{|E(W)|}{4}\rfloor= \lfloor\frac{d(C)}{4} \rfloor$.
Therefore, there are at most $\lfloor\frac{d(C)}{4}\rfloor$ 2-vertices on the boundary of $C$.
Thus Claim \ref{number-2-vertex} holds.
\end{proof}
 
Now, we consider the case when $d(C)\ge 7$. If $C$ is a 7-cycle, then $C$ has at most one 2-vertex. So, $\omega'(C) \geq d(C) - 6 - 1 = 0$.
If $C$ is a cycle of length at least 8, then $\omega'(x) \geq 0$ since $d(C) - 6 - \lfloor \frac{d(C)}{4} \rfloor \geq 0$. Hence  $\omega'(x) \geq 0$ for every $x \in F(G)$.

\medskip
Hence, by (1) and (2), we have that $\omega'(x) \geq 0$ for every $x \in V(G) \cup F(G)$.  This completes the proof of Theorem \ref{main-thm}. 
\end{proof}

\section{Future work}
We proved that $\chi_{\ell}(G^2) \leq 7$ if $G$ is a subcubic planar graph of girth at least 6.
But, we do not know yet whether Question \ref{CK-question} is true or not.
Hence answering Question \ref{CK-question} is interesting.  Or as a weaker version, we can ask the following problem.

\begin{problem}
Is it true that $\chi_{\ell}(G^2) \leq 7$ if $G$ is a subcubic planar graph of girth at least 4?
\end{problem}

\section*{Acknowledgments}
We thank  Daniel Cranston for his very helpful comments and suggestions.
The first author is supported by the National Research Foundation of Korea(NRF) grant funded by the Korea government(MSIT)(No.NRF-2021R1A2C1005785),
and the second author is partially supported by the National Natural Science Foundation of China (No. 12161141006).



\end{document}